\providecommand{\U}[1]{\protect\rule{.1in}{.1in}}
\journal{Indag. Math.}
\newtheorem{theorem}{Theorem}
\newtheorem{corollary}[theorem]{Corollary}
\newtheorem{lemma}[theorem]{Lemma}
\newenvironment{proof}[1][Proof]{\noindent\textbf{#1.} }{\ \rule{0.5em}{0.5em}}
\begin{document}
%
\begin{frontmatter}%
%

\title{Unit preserving linear functionals on clean vector lattices}%
%

\author{Fethi Benamor}%
%

\address{Research Laboratory LATAO
Department of Mathematics Faculty of Sciences of Tunis
Tunis-El Manar University 2092 Tunis, Tunisia
fethi.benamor@ipest.rnu.tn}%
%

\begin{abstract}%

Let $L$ be a vector lattice with a strong unit $e>0$.$\mathfrak{\ }$We show
that a unital linear functional $H:L\rightarrow\mathbb{R}$ satisfies $H\left(
u\right)  \neq0$ for any strong unit $u\in L$ if and only if $H$ acts like a
lattice homomorphism on every clean vector subspace of $L.$ We deduce that $L$
is clean if and only if any unital linear functional $H:L\rightarrow
\mathbb{R}$ such that $H\left(  u\right)  \neq0$ for any strong unit $u\in L$
is a lattice homomorphism.%

\end{abstract}%
%

\begin{keyword}%

vector lattice, lattice homomorphism, strong unit, clean.\medskip

[2010] Primary 6F20, 46A40%

\end{keyword}%
%

\end{frontmatter}%

\section{Introduction}

The classical Gleason-Kahane-Zelazko theorem \cite{Zelazko} asserts that, for
a linear functional $H:A\rightarrow\mathbb{C}$ where $A$ is complex Banach
algebra, $H$ is a non-zero algebra homomorphism if and only if $H\left(
u\right)  \neq0$ for any invertible element $u\in A.$ In \cite{AzBou}, Azouzi
and Boulabiar gave a lattice version of this result. Precisely, they showed
the following. Let $L$ be a vector lattice with a strong unit $e>0$ and denote
by $L_{\mathbb{C}}$ its complexification. For a linear functional
$H:L_{\mathbb{C}}\rightarrow\mathbb{C}$ such that $H\left(  e\right)  >0$, $H$
is complex lattice homomorphism if and only if $H\left(  u\right)  \neq0$ for
all strong units $u\in L_{\mathbb{C}}.$ Unfortunately, a real version of the
latter result cannot be expected. Indeed, the linear functional $H:C\left(
\left[  0,1\right]  \right)  \rightarrow\mathbb{R}$, $f\mapsto\int_{0}^{1}f$
is such that $H\left(  f\right)  \neq0$ for all strong units $f\in C\left(
\left[  0,1\right]  \right)  $ but $H$ is not a lattice homomorphism. In this
work we intend to characterize linear functionals $H:L\rightarrow\mathbb{R}$
such that $H\left(  e\right)  =1$ and $H\left(  u\right)  \neq0$ for all
strong units $u\in L.$ This leads to a new characterization of the cleanness
of a vector lattice with a strong unit.

\section{Linear functionals preserving units}

We assume that the reader is familiar with the notion of vector lattices (also
called Riesz spaces). In this regard, we refer to the monograph
\cite{Aliprantis} for unexplained terminology and notation.

Let $L$ be a vector lattice. An element $u\in L$ is said to be a
\textit{strong unit} in $L$ whenever for all $x\in L$ we can find $\alpha>0$
such that $\left\vert x\right\vert \leq\alpha\left\vert u\right\vert .$ It is
clear that $u$ is a strong unit in $L$ if and only if $\left\vert u\right\vert
$ is a strong unit in $L.$

\textbf{From now on we assume that }$L$\textbf{\ is a vector lattice with a
distinguished strong unit }$e>0.$

By a \textit{component }of $L$ we mean an element $p\in L$ such that
$p\wedge\left(  e-p\right)  =0.$ Observe that $p$ is a component of $L$ if and
only if $e-p$ is a component of $L.$ Following \cite{Hager}, we say that $x\in
L$ is \textit{clean} if we can find a component $p$ of $L$ and a strong unit
$u$ in $L$ such that $x=p+u$. Any strong unit is obviously clean but the
converse need not be true. A subset $E$ of $L$ is said to be \textit{clean}
whenever all elements of $E$ are clean. Let $H:L\rightarrow\mathbb{R} $ be a
linear functional. We will say that $H$ is \textit{unital} if $H\left(
e\right)  =1$, and \textit{unit preserving} whenever $H\left(  u\right)
\neq0$ for all strong units $u\in L.$ Also $H$ will be called
a\textit{\ lattice homomorphism} if $\left\vert H\left(  x\right)  \right\vert
=H\left(  \left\vert x\right\vert \right)  $ for all $x\in L.$ If $E$ is a
subset of $L$, we will say that $H$ acts on $E$ like a lattice homomorphism if
$\left\vert H\left(  x\right)  \right\vert =H\left(  \left\vert x\right\vert
\right)  $ for all $x\in E.$

The following lemma is necessary to prove the main result in this work.

\begin{lemma}
\label{1}Let $H:L\rightarrow\mathbb{R}$ be a unital linear functional$.$

\begin{enumerate}
\item If $H$ is unit preserving then

\begin{enumerate}
\item $H$ is positive,

\item $H\left(  p\right)  \in\left\{  0,1\right\}  $ for any component $p\in
L$, and

\item $H\left(  p\right)  H\left(  q\right)  =0$ for every components $p,q\in
L$ with $p\wedge q=0.$
\end{enumerate}

\item If $H$ acts like a lattice homomorphism on the set of all strong units
of $L$ then

\begin{enumerate}
\item $H$ is positive, and

\item $H\left(  p\right)  \in\left\{  0,1\right\}  $ for any component $p\in
L$.
\end{enumerate}
\end{enumerate}
\end{lemma}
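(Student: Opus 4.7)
The plan is to build both parts on one observation: if $p$ is an $e$-component, then $p\wedge(e-p)=0$ forces $0\le p\le e$, and disjointness gives $|\lambda p+\mu(e-p)|=|\lambda|p+|\mu|(e-p)\ge\min(|\lambda|,|\mu|)\,e$, so $\lambda p+\mu(e-p)$ is a strong unit whenever $\lambda$ and $\mu$ are both nonzero.

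For part 1(a), given $x\ge 0$ and $\lambda>0$, the element $x+\lambda e\ge\lambda e$ is a strong unit, so $H(x+\lambda e)=H(x)+\lambda\ne 0$; since $\lambda>0$ is arbitrary, this forces $H(x)\ge 0$. By positivity, $H(p)\in[0,1]$. For (b), the strong unit $u_\lambda=\lambda p+(e-p)$ (for $\lambda\ne 0$) gives $(\lambda-1)H(p)+1=H(u_\lambda)\ne 0$; if $t=H(p)\notin\{0,1\}$ then $\lambda=1-1/t$ is a nonzero value making this expression vanish, a contradiction. For (c), suppose $H(p)=H(q)=1$. Since $p\wedge q=0$ we have $p+q=p\vee q\le e$, so $u=e-\tfrac12(p+q)\ge\tfrac12 e$ is a strong unit; yet $H(u)=1-\tfrac12(H(p)+H(q))=0$, contradicting preservation of units.

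For part 2(a), for $x\ge 0$ and any integer $n\ge 1$, $nx+e$ is a strong unit with $|H(nx+e)|=H(|nx+e|)=H(nx+e)=nH(x)+1$, so $nH(x)+1\ge 0$; letting $n\to\infty$ yields $H(x)\ge 0$, hence $H(p)\in[0,1]$. For (b), set $t=H(p)$ and apply $|H(u)|=H(|u|)$ to the strong unit $u=-\lambda p+(e-p)$ (for $\lambda>0$), whose modulus by disjointness is $\lambda p+(e-p)$. This produces $|1-(1+\lambda)t|=1+(\lambda-1)t$; squaring and simplifying collapses to $4\lambda t(t-1)=0$, whence $t\in\{0,1\}$.

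The only real obstacle is identifying the right auxiliary strong units, in particular recognising that $p+q=p\vee q\le e$ for disjoint $e$-components $p,q$ (which drives part 1(c)) and that the sign flip in $-\lambda p+(e-p)$ is what extracts information from the Riesz-homomorphism identity in part 2(b); the remaining steps are straightforward arithmetic.
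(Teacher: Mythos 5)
Your proof is correct and follows essentially the same strategy as the paper: test $H$ against strong units of the form $\lambda p+\mu(e-p)+\nu e$ built from disjointness of $p$ and $e-p$ (the paper uses $x+\lambda e$, $p-\alpha e$, and $2p-e$, which are special cases of your family), and use $p+q=p\vee q\le e$ for part 1(c). The only cosmetic differences are that in 1(c) the paper deduces $H(p)+H(q)\le H(e)=1$ directly from positivity rather than constructing the auxiliary unit $e-\tfrac12(p+q)$, and in 2(b) the paper simply takes your $\lambda=1$, i.e.\ $g=2p-e$ with $|g|=e$.
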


\begin{proof}
$(1)$ Assume that $H\left(  u\right)  \neq0$ for any strong unit $u.$

$(a)$ Let $x\in L^{+}$ and observe that $x+\lambda e$ is a strong unit for all
$\lambda>0.$ So $H\left(  x\right)  +\lambda=H\left(  x+\lambda e\right)
\neq0$ for all $\lambda>0,$ from which we derive that $H\left(  x\right)
\geq0.$

$(b)$ Let $p\in L$ be a component and observe that $p-\alpha e$ is a strong
unit for all real $\alpha\notin\left\{  0,1\right\}  .$ So $H\left(  p\right)
-\alpha=H\left(  p-\alpha e\right)  \neq0$ for all real $\alpha\notin\left\{
0,1\right\}  .$ Then $H\left(  p\right)  \in\left\{  0,1\right\}  .$

$(c)$ Observe first that $H\left(  p\right)  ,H\left(  q\right)  \in\left\{
0,1\right\}  .$ Also $p+q\leq e$ from which we derive that $H\left(  p\right)
+H\left(  q\right)  =H\left(  p+q\right)  \leq H\left(  e\right)  =1.$ So
$H\left(  p\right)  H\left(  q\right)  =0.$

$(2)$ Assume that $H$ acts as a lattice homomorphism on the set of strong
units of $L.$

$(a)$ Let $x\in L^{+}$ and observe that $x+\lambda e$ is a strong unit for all
$\lambda>0.$ So
\[
\left\vert H\left(  x\right)  +\lambda\right\vert =\left\vert H\left(
x+\lambda e\right)  \right\vert =H\left(  \left\vert x+\lambda e\right\vert
\right)  =H\left(  x+\lambda e\right)  =H\left(  x\right)  +\lambda\text{ for
all }\lambda>0.
\]
It follows that $\left\vert H\left(  x\right)  \right\vert =H\left(  x\right)
$ and then $H\left(  x\right)  \geq0.$

$(b)$ Observe that $g=2p-e$ is a strong unit and that $\left\vert g\right\vert
=e$. So $\left\vert 2H\left(  p\right)  -1\right\vert =\left\vert H\left(
g\right)  \right\vert =H\left(  \left\vert g\right\vert \right)  =H\left(
e\right)  =1.$Thus $H\left(  p\right)  \in\left\{  0,1\right\}  .$
\end{proof}

In the next theorem we give a characterization of unit preserving unital
linear functionals using their behavior on some remarkable subsets of $L.$

\begin{theorem}
Let $H:L\rightarrow\mathbb{R}$ be a unital linear functional. Then the
following are equivalent.

\begin{enumerate}
\item $H$ is unit preserving.

\item $H$ acts as a lattice homomorphism on the set of all strong units of $L
$.

\item $H$ acts as a lattice homomorphism on any clean vector subspace of $L.$
\end{enumerate}
\end{theorem}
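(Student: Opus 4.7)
The plan is to prove the cycle $(3) \Rightarrow (2) \Rightarrow (1) \Rightarrow (3)$. The first two directions are short. For $(3) \Rightarrow (2)$, I would note that if $u$ is a strong unit, then the one-dimensional subspace $\mathbb{R} u$ is $e$-clean: every nonzero scalar multiple of $u$ is again a strong unit (so $\alpha u = 0 + \alpha u$ is $e$-clean with $p = 0$), and $0 = e + (-e)$ is itself $e$-clean, with $e$ as the $e$-component and $-e$ as the strong unit. For $(2) \Rightarrow (1)$, Lemma 1(2)(a) gives positivity of $H$, and any strong unit $u$ satisfies $|u| \geq \varepsilon e$ for some $\varepsilon > 0$ (taking $e \leq \alpha |u|$ and $\varepsilon = 1/\alpha$), so the Riesz property forces $|H(u)| = H(|u|) \geq \varepsilon > 0$.

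The heart of the proof is $(1) \Rightarrow (3)$, which I would approach in two steps. First, I would upgrade (1) to the assertion that $H$ is a Riesz homomorphism on each strong unit. Given a strong unit $u$, the disjoint elements $u^+$ and $u^-$ satisfy $u^+ + u^- = |u|$, so they are components of the strong unit $|u|$. Since strong-unit-ness is intrinsic to the space (independent of the chosen order unit), the rescaled functional $H/H(|u|)$ is unital with respect to $|u|$ and still preserves units; applying Lemma 1(1)(c) in the Riesz space $(L, |u|)$ to the orthogonal $|u|$-components $u^+, u^-$ yields $H(u^+) H(u^-) = 0$, and a brief case split then delivers $|H(u)| = H(|u|)$.

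With this tool in hand, fix an $e$-clean vector subspace $V$ and $x \in V$. For every $\lambda \in \mathbb{R}$, $\lambda x \in V$ admits a decomposition $\lambda x = p_\lambda + u_\lambda$ with $p_\lambda$ an $e$-component and $u_\lambda$ a strong unit. Chaining the triangle inequality $|\lambda x| \leq p_\lambda + |u_\lambda|$, positivity of $H$, the Riesz property $H(|u_\lambda|) = |H(u_\lambda)|$ from the previous step, the reverse triangle inequality, and $H(p_\lambda) \in \{0, 1\}$ from Lemma 1(1)(b), I would obtain the uniform bound
\[
H(|\lambda x|) - |H(\lambda x)| \;\leq\; 2\, H(p_\lambda) \;\leq\; 2.
\]
The left-hand side equals $|\lambda|\bigl(H(|x|) - |H(x)|\bigr)$ by homogeneity, so letting $|\lambda| \to \infty$ forces $H(|x|) \leq |H(x)|$; the reverse inequality is automatic from positivity. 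The main obstacle I anticipate is exactly this scaling device: the pointwise estimate at a single $\lambda$ has slack proportional to $H(p_\lambda)$ and looks too weak on its own, but reading it as a bound uniform in $\lambda$ and sending $|\lambda| \to \infty$ is what converts a coarse triangle estimate into sharp equality.
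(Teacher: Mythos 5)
Your proposal is correct, and while it follows the same broad outline as the paper (first reduce to strong units via $H\left( u^{+}\right) H\left( u^{-}\right) =0$, then dilate and pass to a limit for the subspace step), both key steps are executed differently. To get $H\left( u^{+}\right) H\left( u^{-}\right) =0$ the paper stays with the fixed unit $e$: it sets $p=e\wedge \left( \alpha u^{+}\right) $ and $q=e\wedge \left( \alpha u^{-}\right) $ where $e\leq \alpha \left\vert u\right\vert $, checks $p+q=e$ and $p\wedge q=0$, dominates $u^{+}\leq \beta p$ and $u^{-}\leq \gamma q$, and applies Lemma \ref{1}(1)(c) to $p,q$; you instead re-base the space at the strong unit $\left\vert u\right\vert $, observe that $u^{+}$ and $u^{-}$ are themselves disjoint $\left\vert u\right\vert $-components, and apply Lemma \ref{1}(1)(c) to the rescaled functional $H/H\left( \left\vert u\right\vert \right) $ --- legitimate, since $H\left( \left\vert u\right\vert \right) >0$ by positivity plus unit-preservation, and since being a strong unit and preserving units do not depend on the choice of distinguished unit. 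Your version is shorter but leans on reading the lemma as a statement about an arbitrary strong unit rather than the fixed $e$. For the subspace step the paper splits on $H\left( p\right) \in \left\{ 0,1\right\} $, handles the case $H\left( p\right) =1$ by passing to $e-x=\left( e-p\right) +\left( -u\right) $, and then lets $n\rightarrow +\infty $ along $nx$, using in each case either $H\left( \left\vert nx\right\vert \right) =\left\vert H\left( nx\right) \right\vert $ or $H\left( \left\vert \frac{1}{n}e-x\right\vert \right) =\left\vert \frac{1}{n}-H\left( x\right) \right\vert $; your uniform defect bound $H\left( \left\vert \lambda x\right\vert \right) -\left\vert H\left( \lambda x\right) \right\vert \leq 2H\left( p_{\lambda }\right) \leq 2$ followed by $\left\vert \lambda \right\vert \rightarrow +\infty $ reaches the same conclusion without any case distinction and is, if anything, cleaner. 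Both arguments use exactly the same inputs from Lemma \ref{1} (positivity, $H\left( p\right) \in \left\{ 0,1\right\} $, $H\left( p\right) H\left( q\right) =0$), so neither is more general; yours trades the paper's explicit component construction and case analysis for a rescaling trick and a quantitative estimate.
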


\begin{proof}
$(1)\Rightarrow(2)$ Let $u\in L$ be a strong unit and pick $\alpha>0$ such
that $e\leq\alpha\left\vert u\right\vert .$ Now put $p=e\wedge\left(  \alpha
u^{+}\right)  $ and $q=e\wedge\left(  \alpha u^{-}\right)  .$ It is clear that
$0\leq p,q\leq e$, $p\wedge q=0$ and $p+q=e\wedge\left(  \alpha u^{+}\right)
+e\wedge\left(  \alpha u^{-}\right)  =e\wedge\left(  \alpha\left\vert
u\right\vert \right)  =e.$ We derive that $p$ and $q$ are components. Observe
now that $u^{+}\leq\beta e=\beta p+\beta q$ for some $\beta\geq\frac{1}%
{\alpha}.$ As $q\wedge u^{+}=0$ we deduce that $u^{+}\leq\beta p.$ Analogously
$u^{-}\leq\gamma q$ for some $\gamma>0.$ Since, by Lemma \ref{1}, $H$ is
positive we obtain $0\leq H\left(  u^{+}\right)  \leq\beta H\left(  p\right)
$ and $0\leq H\left(  u^{-}\right)  \leq\gamma H\left(  q\right)  .$ So $0\leq
H\left(  u^{+}\right)  H\left(  u^{-}\right)  \leq\beta\gamma H\left(
p\right)  H\left(  q\right)  =0,$ where we use Lemma \ref{1} once again. It
follows that $H\left(  u^{+}\right)  H\left(  u^{-}\right)  =0$ and so%
\[%
\begin{array}
[c]{ll}%
H\left(  \left\vert u\right\vert \right)  =H\left(  u^{+}+u^{-}\right)  &
=H\left(  u^{+}\right)  +H\left(  u^{-}\right) \\
& =\left\vert H\left(  u^{+}\right)  -H\left(  u^{-}\right)  \right\vert
=\left\vert H\left(  u^{+}-u^{-}\right)  \right\vert =\left\vert H\left(
u\right)  \right\vert .
\end{array}
\]

$(2)\Rightarrow(3)$ Let $x\in L$ be clean. Then $x=p+u$ where $p\in L$ is a
component and $u\in L$ is a strong unit. It follows that $H\left(  \left\vert
x-p\right\vert \right)  =\left\vert H\left(  x\right)  -H\left(  p\right)
\right\vert .$ By Lemma \ref{1} $H$ is positive and $H\left(  p\right)
\in\left\{  0,1\right\}  .$

If $H\left(  p\right)  =0$ then $H\left(  \left\vert x-p\right\vert \right)
=\left\vert H\left(  x\right)  -H\left(  p\right)  \right\vert =\left\vert
H\left(  x\right)  \right\vert $ and, as $H$ is positive, $H\left(  \left\vert
x\right\vert \right)  =H\left(  \left\vert x\right\vert -p\right)  \leq
H\left(  \left\vert x-p\right\vert \right)  \leq H\left(  \left\vert
x\right\vert +p\right)  =H\left(  \left\vert x\right\vert \right)  ,$from
which we derive that $\left\vert H\left(  x\right)  \right\vert =H\left(
\left\vert x\right\vert \right)  .$

If $H\left(  p\right)  =1$ then $e-x=e-p+\left(  -u\right)  $ with $e-p$ is a
component, $-u$ is a strong unit and $H\left(  e-p\right)  =0.$ So $H\left(
\left\vert e-x\right\vert \right)  =\left\vert H\left(  e-x\right)
\right\vert =\left\vert 1-H\left(  x\right)  \right\vert .$

Assume now that $E$ is a clean vector subspace of $L$ and let $x\in E.$ It
follows that $nx$ is clean for all $n\in\left\{  1,2,...\right\}  $. So
$H\left(  \left\vert nx\right\vert \right)  =\left\vert H\left(  nx\right)
\right\vert $ or $\left\vert H\left(  e-nx\right)  \right\vert =\left\vert
1-H\left(  nx\right)  \right\vert $ for all $n\in\left\{  1,2,...\right\}  .$
If $H\left(  \left\vert nx\right\vert \right)  =\left\vert H\left(  nx\right)
\right\vert $ for some $n\in\left\{  1,2,...\right\}  ,$ then $H\left(
\left\vert x\right\vert \right)  =\left\vert H\left(  x\right)  \right\vert .$
Otherwise, $\left\vert H\left(  e-nx\right)  \right\vert =\left\vert
1-H\left(  nx\right)  \right\vert $ for all $n\in\left\{  1,2,...\right\}  .$
So $H\left(  \left\vert \frac{1}{n}e-x\right\vert \right)  =\left\vert
\frac{1}{n}-H\left(  x\right)  \right\vert $ for all $n\in\left\{
1,2,...\right\}  .$ Now observe that $H\left(  \left\vert \frac{1}%
{n}e-x\right\vert \right)  \underset{n\rightarrow+\infty}{\longrightarrow
}H\left(  \left\vert x\right\vert \right)  $ $\left(  \text{because }H\text{
is positive}\right)  $ and $\left\vert \frac{1}{n}-H\left(  x\right)
\right\vert \underset{n\rightarrow+\infty}{\longrightarrow}\left\vert H\left(
x\right)  \right\vert ,$ from which we derive that $H\left(  \left\vert
x\right\vert \right)  =\left\vert H\left(  x\right)  \right\vert $. This shows
that $H$ acts on $E$ like a lattice homomorphism.

$(3)\Rightarrow(2)$ Let $u\in L$ be a strong unit and observe that $\left\{
\lambda u:\lambda\in\mathbb{R}\right\}  $ is a clean vector subspace of $L$.
So $H\left(  \left\vert \lambda u\right\vert \right)  =\left\vert H\left(
\lambda u\right)  \right\vert $ for all $\lambda\in\mathbb{R}$ and then
$H\left(  \left\vert u\right\vert \right)  =\left\vert H\left(  u\right)
\right\vert .$

$(2)\Rightarrow(1)$ Observe first that, by Lemma \ref{1}, $H$ is positive. Now
let $u\in L$ be a strong unit of $L$ and $\alpha>0$ be such that $e\leq
\alpha\left\vert u\right\vert .$ As $H$ is positive, $1=H\left(  e\right)
\leq\alpha H\left(  \left\vert u\right\vert \right)  =\alpha\left\vert
H\left(  u\right)  \right\vert .$ So $H\left(  u\right)  \neq0$ and we are done.
\end{proof}

\begin{corollary}
Assume that $L$ is clean and let $H:L\rightarrow \mathbb{R}$ be a unital
linear functional. The following are equivalent.
\begin{enumerate}
\item $H$ preserves units.
\item $H$ acts like a lattice homomorphism on the set of all strong units.
\item $H$ is a lattice homomorphism.
\end{enumerate}
\end{corollary}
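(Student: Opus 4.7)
The plan is to reduce the corollary directly to the preceding theorem by exploiting the hypothesis that $L$ itself is $e$-clean. The equivalence (1)$\Leftrightarrow$(2) is nothing more than the first half of the theorem, so no additional work is required there.

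For (2)$\Rightarrow$(3), I would observe that the full Riesz space $L$ is an $e$-clean vector subspace of itself (trivially, since every element of $L$ is $e$-clean by assumption, and $L$ is a vector subspace of $L$). The theorem, statement (3), then guarantees that $H$ acts like a Riesz homomorphism on this subspace, which is precisely the definition of $H$ being a Riesz homomorphism on $L$.

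For (3)$\Rightarrow$(1), the argument is the short computation already used at the end of the theorem's proof in the implication (2)$\Rightarrow$(1): if $H$ is a Riesz homomorphism then $H$ is in particular positive, and for a strong unit $u$ choose $\alpha>0$ with $e\leq\alpha|u|$; positivity gives $1=H(e)\leq\alpha H(|u|)=\alpha|H(u)|$, forcing $H(u)\neq 0$.

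There is no real obstacle here: the corollary is essentially a specialization of the theorem to the case $E=L$, together with the trivial fact that $H$ being a Riesz homomorphism implies it preserves units. The only point worth stating cleanly is that the $e$-cleanness of $L$ is exactly what upgrades the conclusion \textquotedblleft Riesz homomorphism on every $e$-clean subspace\textquotedblright\ to \textquotedblleft Riesz homomorphism on $L$\textquotedblright.
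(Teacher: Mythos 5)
Your proposal is correct and is exactly the intended argument: the paper states the corollary without proof as an immediate consequence of the preceding theorem, obtained by noting that an $e$-clean $L$ is itself an $e$-clean vector subspace of $L$ (so the theorem's statement (3) yields that $H$ is a Riesz homomorphism on all of $L$), together with the easy observation that a Riesz homomorphism with $H(e)=1$ preserves units. No gaps.
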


In the litterature, clean vector lattices with a strong unit $e>0$ are
characterized in many ways \cite{BouSmiti,Hager}. Our last result in this work
is a contribution to this topic. First, recall that the space $Max\left(
L\right)  $ of all maximal ideals in the vector lattice $L$ with a strong unit
$e>0$, equipped with the hull-kernel topology is a compact Hausdorff space.
Also, for all $x\in L$ and all $J\in Max\left(  L\right)  $ there exists a
unique real denoted by $\symbol{94}x\left(  J\right)  $ such that
$x-\symbol{94}x\left(  J\right)  e\in J.$ For all $x\in L,$ the function
$\symbol{94}x$ is a real-valued continuous function on $Max\left(  L\right)
$. Moreover, the set $\symbol{94}L=\left\{  \symbol{94}x:x\in L\right\}  $
separates the points of $Max\left(  L\right)  $. For more details on this
subject the reader can consult the standard monograph \cite{RieszI}.

\begin{theorem}
The following are equivalent.

\begin{enumerate}
\item Every unit preserving unital linear functional $H:L\rightarrow
\mathbb{R}$ is a lattice homomorphism.

\item $L$ is clean.
\end{enumerate}
\end{theorem}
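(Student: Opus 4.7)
The direction $(2)\Rightarrow(1)$ is essentially free from the previous corollary: if $L$ itself is $e$-clean, then $L$ is an $e$-clean vector subspace of $L$, so the main theorem (applied with $E = L$) immediately gives that every unital units-preserving $H$ acts as a Riesz homomorphism on all of $L$.

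The substantive content is $(1)\Rightarrow(2)$, which I would prove by contrapositive. Assume $L$ is not $e$-clean and pick $x_0\in L$ that is not $e$-clean; I want to construct a unital $H:L\to\mathbb{R}$ that preserves units yet is not a Riesz homomorphism. The natural arena is the Yosida-type representation $x\mapsto\hat{x}$ from $L$ into $C(X)$ with $X = \mathrm{Max}(L)$, recalled in the paragraph preceding the theorem. In this representation, a strong unit corresponds to an element $\hat{u}\in\hat{L}$ with $|\hat{u}|$ bounded away from zero on $X$, an $e$-component $p$ satisfies $\hat{p}=\mathbf{1}_{U_p}$ for some clopen $U_p\subseteq X$, and unital Riesz homomorphisms $L\to\mathbb{R}$ are exactly the evaluations $\delta_J(x)=\hat{x}(J)$ for $J\in X$. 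Set $Z_0=\hat{x_0}^{-1}(\{0\})$ and $Z_1=\hat{x_0}^{-1}(\{1\})$; these are disjoint compact subsets of $X$, and the non-$e$-cleanness of $x_0$ translates into the statement that no $e$-component $p$ of $L$ yields a clopen $U_p$ with $Z_0\subseteq U_p$ and $Z_1\cap U_p=\emptyset$. A standard Boolean/compactness argument then produces $J_0\in Z_0$ and $J_1\in Z_1$ with $\hat{p}(J_0)=\hat{p}(J_1)$ for every $e$-component $p$ of $L$.

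The key technical step, and the main obstacle, is to upgrade this indistinguishability to show that $J_0$ and $J_1$ lie in the same connected component $K$ of $X$. The plan here is to use the uniform density of $\hat{L}$ in $C(X)$ (the lattice Stone--Weierstrass theorem applies because $\hat{L}$ is a Riesz subspace that separates points and contains the constants), together with compactness, to argue that if $J_0$ and $J_1$ were in different quasi-components then a clopen separator could be approximated by elements of $\hat{L}$ and then truncated at a suitable threshold to produce an actual $e$-component of $L$ separating $J_0$ from $J_1$, contradicting the choice above.

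Once $J_0,J_1$ are in a common connected component $K$, I set $H := \tfrac{1}{2}\delta_{J_0}+\tfrac{1}{2}\delta_{J_1}$. This $H$ is clearly unital and positive. For any strong unit $u\in L$, $\hat{u}$ is continuous and nowhere zero on the connected set $K$, hence has constant sign on $K$, so $\hat{u}(J_0)$ and $\hat{u}(J_1)$ are nonzero and share a sign, giving $H(u)\neq 0$; thus $H$ preserves units. Finally, setting $y_0 := x_0-\tfrac{1}{2}e\in L$, one has $\hat{y_0}(J_0)=-\tfrac{1}{2}$ and $\hat{y_0}(J_1)=+\tfrac{1}{2}$, so $H(y_0)=0$ while $H(|y_0|)=\tfrac{1}{2}$. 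Thus $|H(y_0)|\neq H(|y_0|)$ and $H$ is not a Riesz homomorphism, completing the contrapositive. The only delicate point is the connected-component step; all remaining verifications are routine given the representation theory already cited in the paper.
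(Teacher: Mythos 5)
Your overall architecture is sound and your endgame coincides with the paper's: both proofs reduce $(1)\Rightarrow(2)$ to producing two distinct points $J_0,J_1$ in a single connected component of $\mathrm{Max}(L)$, define $H=\tfrac12(\delta_{J_0}+\delta_{J_1})$, use connectedness (constant sign of $\hat u$ on the component) to see that $H$ preserves units, and exhibit an element whose image changes sign between $J_0$ and $J_1$ to see that $H$ is not a Riesz homomorphism. (Your witness $y_0=x_0-\tfrac12 e$ is a perfectly good substitute for the paper's appeal to point-separation, and your $(2)\Rightarrow(1)$ is exactly what the paper leaves implicit.) The difference is how the two points in a common component are obtained: the paper simply invokes \cite[Theorem 3.5]{Hager}, which says $L$ is $e$-clean if and only if $\mathrm{Max}(L)$ is totally disconnected, so non-cleanness immediately yields a nondegenerate component. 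You instead try to rederive this from a single non-clean element $x_0$.

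That rederivation is where the genuine gap sits, and it is precisely the step you yourself flag as ``the main obstacle.'' Your compactness argument correctly produces $J_0\in Z_0$, $J_1\in Z_1$ that no $e$-component separates, i.e.\ $J_0$ and $J_1$ lie in the same quasi-component \emph{relative to the family} $\{U_p\}$. To conclude they lie in the same connected component you must know that every clopen subset $U$ of $\mathrm{Max}(L)$ is of the form $U_p$ for an actual $e$-component $p$ of $L$ (only then does your quasi-component agree with the topological quasi-component, which equals the connected component in a compact Hausdorff space). You sketch this via Stone--Weierstrass and truncation, but the sketch is incomplete: the truncation $p=\bigl(2(x-\tfrac14 e)^+\bigr)\wedge e$ of an approximant $x$ of $\mathbf{1}_U$ only gives $\widehat{p\wedge(e-p)}=0$, and to upgrade that to $p\wedge(e-p)=0$ you need injectivity of the Yosida representation, i.e.\ an Archimedean hypothesis that the theorem does not assume (otherwise one must pass to the quotient by the radical and argue that components and units lift, which is additional work). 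In short, the step you leave as a plan is essentially the hard direction of the cited Hager--Kimber--McGovern theorem; as written, your proof has a hole exactly there, although it closes immediately once you either cite that theorem (as the paper does) or restrict to the Archimedean case and carry out the truncation argument in full.
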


\begin{proof}
We only prove that $(1)\Rightarrow(2).$ Assume, by contrapositive, that $L$ is
not clean. By \cite[Theorem 3.5]{Hager}, $Max\left(  L\right)  $ is not
totally disconnected. So one of its connected components, say $C,$ contains
$2$ different elements $J_{1}$ and $J_{2}.$ Let $H:L\rightarrow\mathbb{R}$ be
the linear functional defined by $H\left(  x\right)  =\frac{1}{2}\left(
\symbol{94}x\left(  J_{1}\right)  +\symbol{94}x\left(  J_{2}\right)  \right)
.$ As $C$ is connected $\symbol{94}x\left(  C\right)  $ is an interval in
$\mathbb{R}$ for all $x\in L.$ So $H\left(  x\right)  =\frac{1}{2}\left(
\symbol{94}x\left(  J_{1}\right)  +\symbol{94}x\left(  J_{2}\right)  \right)
\in\symbol{94}x\left(  C\right)  $ for all $x\in L.$ If $x\in L$ is a strong
unit then $0\notin\symbol{94}x\left(  C\right)  ,$so $H\left(  x\right)
\neq0.$ Since $\symbol{94}L=\left\{  \symbol{94}x:x\in L\right\}  $ separates
the points of $Max\left(  L\right)  $ we can find $x\in L$ such that
$\symbol{94}x\left(  J_{1}\right)  =1$ and $\symbol{94}x\left(  J_{2}\right)
=-1.$ So $0=\left\vert H\left(  x\right)  \right\vert \neq H\left(  \left\vert
x\right\vert \right)  =1$ and then $H$ is not a lattice homomorphism.
\end{proof}

\end{document}